\newtheorem{theorem}{Theorem}
\newtheorem{lemma}{Lemma}
\theoremstyle{definition}
\newtheorem{definition}{Definition}
\newtheorem{remark}{Remark}
\theoremstyle{corollary}
\newcommand{\De}{\Delta}
\newcommand{\Om}{{\Omega}}
\newcommand{\al}{{\alpha}}
\newcommand{\ga}{{\gamma}}
\newcommand{\si}{{\sigma}}
\newcommand{\sC}{\mathscr{C}}
\newcommand{\sD}{\mathscr{D}}
\newcommand{\sE}{\mathscr{E}}
\newcommand{\N}{{\mathbb N}}
\newcommand{\R}{{\mathbb R}}
\newcommand{\cF}{{  F}}
\newcommand{\cW}{{  W}}
\newcommand{\g}{{\nabla}}
\newcommand{\pd}{\partial}
\newcommand{\di}{{\rm div\, }}
\newcommand{\wrt}{{with respect to }}
\begin{document}
\title{Pullback attractors on time-dependent spaces for fluid-structure interaction systems.}
\author{Tamara Fastovska\footnote{\small e-mail:
 fastovskaya@karazin.ua} \\Kharkiv Karazin National University, \\  4 Svobody sq., 61077  Kharkiv, Ukraine\\Kharkiv Automobile and Highway National University,  \\25 Yaroslava Mudrogo st., 61002  Kharkiv, Ukraine}

\maketitle

 \centerline{(Communicated by the associate editor name)}
\begin{abstract}
We study the long-time dynamics of a non-autonomous coupled system consisting of the 3D linearized
Na\-vier--Stokes equations  and
nonlinear elasticity equations.
We show that this problem generates a process on time-dependent spaces possessing a pullback attractor.
\end{abstract}
\section{Introduction}
We consider a coupled non-autonomous system with time-dependent coefficients which describes
interaction of a homogeneous viscous fluid
which  occupies a domain $  \EuScript O$ bounded by
the solid walls of the container $S$ and a horizontal boundary $\Om$
on which a thin nonlinear elastic plate is placed.
The motion of the fluid is described
by  linearized 3D Navier--Stokes equations.
To describe  deformations of the plate
 we consider  a generalized plate model  which accounts only for
transversal displacements and
 covers
a general large deflection
Karman type model (see, e.g., \cite{lagnese}).  However, our results can be also
applied in the cases of nonlinear Berger and Kirchhoff plates.

\smallskip\par

Our mathematical model is formulated as  follows.
\par
 Let $  \EuScript O\subset \R^3$
 be a bounded domain  with a sufficiently smooth
 boundary $\partial  \EuScript O$. We assume that
$\partial  \EuScript O=\overline{\Omega}\cup \overline{S}$,
 where $\Om\cap S=\emptyset$ and
$$
\Om\subset\{ x=(x_1;x_2;0)\, :\,x'\equiv
(x_1;x_2)\in\R^2\}
$$ with the smooth contour $\Gamma=\partial\Om$
and $S$
 is a  surface which lies in the subspace $\R^3_- =\{ x_3\le 0\}$.
 The exterior normal on $\partial  \EuScript O$ is denoted
 by $n$. We have that $n=(0;0;1)$ on $\Om$.
 We consider the following  Navier--Stokes equations in $  \EuScript O$
for the fluid velocity field $v=v(x,t)=(v^1(x,t);v^2(x,t);v^3(x,t))$
and for the pressure $p(x,t)$:
\begin{equation}\label{fl.1}
  \mu(t) v_t-\Delta v+\nabla p=f(x,t)\quad {\rm in\quad}   \EuScript O
   \times(\tau,+\infty),\;\tau\in \mathbb R.
\end{equation}
   \begin{equation}\label{fl.2}
   \di v=0 \quad {\rm in}\quad   \EuScript O
   \times(\tau,+\infty),\;\tau\in \mathbb R.
  \end{equation}
where $f(x, t)$ is a volume force.\par
   We supplement (\ref{fl.1}) and (\ref{fl.2}) with  the non-slip  boundary
   conditions imposed  on the velocity field $v=v(x,t)$:
\begin{equation}\label{fl.4}
v=0 ~~ {\rm on}~S;
\quad
v\equiv(v^1;v^2;v^3)=(0;0;u_t) ~~{\rm on} ~ \Om.
\end{equation}
Here $u=u(x,t)$ is the transversal displacement
of the plate occupying $\Om$ and satisfying
the following  equation:
\begin{equation*}
\rho(t) u_{tt} + \De^2 u + \cF(u)=g(x,t)+p|_\Om,\;
~~{\rm in}~~ \Omega \times (\tau, \infty),\;\tau\in \mathbb R.
\end{equation*}
where $g(x,t)$ is a given body force on the plate, $\cF(u)$ is a nonlinear feedback force which will be specified later. \par
We impose clamped boundary conditions on the plate
\begin{equation}
u|_{\pd\Om}=\left.\frac{\pd u}{\pd n} \right|_{\pd\Om}=0 \label{plBC}
\end{equation}
and supply \eqref{fl.1}--\eqref{plBC} with initial data of the form
\begin{equation}
 v(x,\tau)=v_\tau,\quad u(x,\tau)=u_\tau^0, \quad u_t(x,\tau)=u_\tau^1, \label{IC}
\end{equation}
We  note that  \eqref{fl.2} and \eqref{fl.4} imply the following
compatibility condition
\begin{equation}\label{Com-con}
\int_\Om u_t(x',t) dx'=0 \quad \mbox{for all}~~ t\ge \tau\;\tau\in \mathbb R.
\end{equation}
This condition fulfills when
\[
\int_\Om u(x',t) dx'=const \quad \mbox{for all}~~ t\ge \tau,\;\tau\in \mathbb R.
\]
which can be interpreted as preservation of the volume of the fluid.
\par

\medskip \par
In this paper our main point of interest is well-posedness and
long-time dynamics of
solutions to the coupled problem in
\eqref{fl.1}--\eqref{IC}
for the velocity $v$ and the displacement $u$. \par
We consider  this problem under
rather general hypotheses concerning nonlinearity.
These hypotheses cover the cases of von Karman, Berger and Kirchhoff
plates. We show that  problem \eqref{fl.1}--\eqref{IC} generates
a process on a family of time-dependent energy spaces.
Our main result
states that under some natural conditions concerning
 feedback forces system \eqref{fl.1}--\eqref{IC}
possesses a pullback attractor.
To establish this results we  adjust the compensated compactness  approach widely used for dynamical systems of autonomous equations  (see \cite{cl-jdde}, \cite{cl-mem} and
\cite[Chapters 7,8]{cl-book} and also the references therein) to processes and pullback attractors.
\par
The mathematical studies of the long-time behavior of autonomous problems
of fluid--structure interaction
in the case of viscous fluids and elastic plates/bodies
have a long history (see, e.g.  \cite{Chu_2010, CF, ChuRyz2011, F} and references  therein).\par
In the present work we investigate the existence of a pullback attractor in case of time-dependent coefficients in the main parts of equations. For such problems only a few  papers are devoted to the existence of pullback attractors \cite{Carab, Conti, Duane}. In paper \cite{Carab}  a wave equation with time-dependent coefficient before the damping term is considered, consequently the energy space does not depend on the time parameter. Papers \cite{Conti, Duane} deal with wave equations with time-dependent coefficients before the second derivatives with respect to time and to the space variable respectively. In both works the existence of time-dependend pullback attractor in a scale of spaces is established.   \par
In our paper we consider for the first time, to the best of our knowledge, the    interaction model for  a  Newtonian fluid and a plate  with time-dependent coefficients before the time deriatives. The peculiarity of the problem considered consists in the absence of any mechanical damping  in the  plate component and the strong coupling of fluid and plate components.   \par
    We prove the well-posedness of the system considered and investigate the
long-time dynamics of
solutions to the coupled problem in
\eqref{fl.1}--\eqref{IC}. In order to show the existence of a time-dependent pullback attractor we derive an abstract result on the asymtotic  compactness of processess on time-dependent spaces.   
\par
The paper is organized as follows.
 In Section 2 we introduce   notations, recall
some properties of Sobolev type spaces with non-integer
indexes   on bounded domains and collect some regularity
properties of (stationary)
Stokes problem which we use in the further considerations. The main notions from the theory of pullback attractors and new abstract results are presented in Section 3. 
Our main result in  Section 4
is Theorem~\ref{WP} on  well-posedness and existence of time-dependend absorbing set.
Our main result in  Section 5 states existence of a
pullback
attractor.
The  argument is based on the
property established in Theorem~\ref{pr:com}.

\section{Spaces and notations.} 
Now we introduce Sobolev type spaces which are used in what follows
(see e.g. \cite{Triebel78}).\par
Let $D$ be a sufficiently smooth domain  and $s\in\R$.
We denote  by $H^s(D)$ the Sobolev space of order $s$
on a set $D$ which we define as restriction (in the sense of distributions)
 of the
space $H^s(\R^d)$ (introduced via Fourier transform).
We denote by $\|\cdot \|_{s,D}$ the norm in  $H^s(D)$
which we define by the relation
\[
\|u\|_{s,D}^2=\inf\left\{\|w\|_{s,\R^d}^2\, :\; w\in H^s(\R^d),~~ w=u ~~
\rm{on}~~D
    \right\}
\]
We also use the notation $\|\cdot \|_{D}=\|\cdot \|_{0,D}$
for the corresponding $L_2$-norm and, similarly, $(\cdot,\cdot)_D$ for the $L_2$
inner product.
We denote by $H^s_0(D)$ the closure of $C_0^\infty(D)$ in  $H^s(D)$
(\wrt  $\|\cdot \|_{s,D}$) and introduce the spaces
\[
H^s_*(D):=\left\{f\big|_D\, :\;  f\in H^s(\R^d), \;
{\rm supp }\, f\subset \overline{D}\right\},\quad s\in \R.
\]
Since the extension by zero of elements from $H^s_*(D)$ gives us an
element of $H^s(\R^d)$,
these spaces $H^s_*(D)$ can be treated not only as functional spaces defined
on $D$ (and contained in $H^s(D)$) but also as  (closed) subspaces
of $ H^s(\R^d)$. Below we need them to describe
boundary traces on $\Om\subset\partial   \EuScript O$.
We endow the classes $H^s_*(D)$ with the induced norms
 $\|f \|^*_{s,D}= \| f \|_{s,\R^d}$
for $f\in H^s_*(D)$. It is clear that
\[
\|f \|_{s,D}\le \|f \|^*_{s,D}, ~~ f\in H^s_*(D).
\]
It is known  (see \cite[Theorem 4.3.2/1]{Triebel78})
that $C_0^\infty(D)$ is dense in $H^s_*(D)$ and
\begin{align*}
& H^s_*(D)\subset H^s_0(D)\subset H^s(D),~~~ s\in\R;\\
& H^s_0(D) =  H^s(D),~~~ -\infty< s\le 1/2;\\
& H^s_*(D)= H^s_0(D),~~~ -1/2<  s<\infty,~~ s-1/2\not\in
\{ 0,1,2,\ldots\}.
\end{align*}
In particular, $H^s_*(D)= H^s_0(D)= H^s(D)$ for $|s|<1/2$. By
 \cite[Remark 4.3.2/2]{Triebel78} we also have that
 $H^s_*(D)\neq  H^s(D)$ for
 $|s|>1/2$. Note that  in the notations of \cite{LiMa_1968}
the space $H^{m+1/2}_*(D)$ is the same as $H^{m+1/2}_{00}(D)$ for every
 $m= 0,1,2,\ldots$ , and for $s=m+\si$ with $0<\si<1$ we have
\[
\|u \|^*_{s,D}=\left\{ \| u\|^2_{s,D}
+\sum_{|\al|=m}\int_D \,\frac{|D^\al u(x)|^2}{d(x,\pd D)^{2\si}}\, dx
\right\}^{1/2},
\]
where $d(x,\pd D)$ is the distance between $x$ and $\pd D$.
The norm  $\|\cdot \|^*_{s,D}$ is equivalent to
 $\|\cdot \|_{s,D}$ in the case when
$s>-1/2$ and  $s-1/2\not\in\{ 0,1,2,\ldots\}$,
but  not equivalent in general.
\par
Understanding adjoint spaces \wrt duality between
$C_0^\infty(D)$ and $[C_0^\infty(D)]'$
by Theorems 4.8.1 and 4.8.2 from \cite{Triebel78} we also have that
\begin{align*}
 [H^s_*(D)]'= H^{-s}(D),~ s\in\R, ~~~\mbox{and} ~~~
 [H^s(D)]' =  H_*^{-s} (D),~ s\in (-\infty,1/2).
\end{align*}
To describe fluid velocity fields
we introduce the following scale of spaces.
\par
Let $\mathscr{C}(  \EuScript O)$  be the class of
$C^\infty$ vector-valued solenoidal (i.e., divergence-free) functions
$ v=(v^1;v^2;v^3)$
on $\overline{\EuScript O}$ which vanish in a neighborhood of $S$
and such that $v^1=v^2=0$ on $\Om$.
We denote by $X_t$ the closure of $\sC(  \EuScript O)$ \wrt  the following $L_2$-norms 
$$
\|\cdot\|_{X_t}=\mu(t)\|\cdot\|_{L_2(  \EuScript O)}
$$
 and by $Y$ the closure  \wrt the $H^1(  \EuScript O)$-norm. One
can see that
\[
X_t=\left\{ v=(v^1;v^2;v^3)\in [L_2(  \EuScript O)]^3\, :\; {\rm div}\, v=0;\;
\gamma_n v\equiv (v,n)=0~\mbox{on}~ S,\; t\in\mathbb R\right\}
\]
and
\[
Y=\left\{
v=(v^1;v^2;v^3)\in [H^1(  \EuScript O)]^3\, \left| \begin{array}{l}
 {\rm div}\, v=0,\;
v=0~\mbox{on}~ S, \\ v^1=v^2=0~\mbox{on}~\Om \end{array} \right.
  \right\}.
\]
The space $Y$ is endowed with the norm $\|\cdot\|_Y= \|\nabla\cdot\|_{L_2(  \EuScript O)}$.
For some details concerning this type spaces we refer to \cite{temam-NS},
for instance.\par
We also need the Sobolev spaces consisting of functions with zero average
on the domain $\Om$, namely
we consider the space
\[
\widehat{L}_2(\Om)=\left\{u\in L_2(\Om): \int_\Om u(x') dx' =0 \right\}
\]
and also the scale of time-dependent spaces 
\[\widehat{L}_t^2(\Omega)=\left\{ L^2(\Omega): \|\cdot\|_{\widehat{L}_t^2(\Omega)}^2=\rho(t) \|\cdot\|_{{L}_2}^2,\; t\in\mathbb R\right\}.\]
 We use the notation $\widehat H^s(\Om)=H^s(\Om)\cap\widehat L_2(\Om)$ for $s>0$
with the standard $H^s(\Om)$-norm.
The notations  $\widehat H^s_*(\Om)$ and $\widehat H^s_0(\Om)$
have a similar meaning.

\section{Abstract results on  attractors.}
We begin with some definitions from the theory of processses.
\begin{definition}
\label{d1}
A two paramter family $U(t,\tau): H_\tau\to H_t$, $t\ge \tau\in \mathbb R$ of operators in a scale of Banach spaces $H_t$, $t\in \mathbb R$ is called a process if
\begin{itemize}
\item  $U(\tau, \tau)=I$
\item $U(t, s)U(s, \tau)=U(t, \tau),\; t\ge s\ge \tau\in \mathbb R$
\end{itemize}
\end{definition}

\begin{definition}
\label{d2}
The family of sets $\EuScript B=\{B_t\}_{t\in\mathbb R}$, for $B_t\in H_t$ is positively invariant  if $U(t,\tau)B_\tau\subset B_t$  $\forall t\in\mathbb R$.
\end{definition}

\begin{definition} 
\label{d3}
The family of bounded sets $\EuScript B=\{B_t\}_{t\in\mathbb R}$, for $B_t\in H_t$  is uniformly bounded if there exists $R>0$ such that $B_t\in \mathbb B_t(R)=\{z\in H_t, \|z\|_{H_t}\le R \}$ for any $ t\in\mathbb R$.
\end{definition}
\begin{definition} 
To study the asymptotic
behavior of the operators $U(t, \tau)$ we need to define a suitable object which attracts solutions of the system originating sufficiently far in the past. In order to do it we need to introduce the notion of absorbtion and attraction.
\end{definition}
\begin{definition}
\label{d4}
The family of uniformly  bounded  sets $\EuScript B=\{B_t\}_{t\in\mathbb R}$, is  time-dependent absorbing  if for any $R>0$ there exists $ \Theta=\Theta(R)$ such that  $U(t,\tau)\mathbb B_\tau(R)\in B_t$ for any   $\tau\le t-\Theta$.
\end{definition}
The process $U(t, \tau)$ is called dissipative whenever it admits a pullback absorbing family.
\begin{definition}
\label{d31}
A time dependent $\omega$-limit  of any pullback absorbing family $\EuScript B=\{B_t\}_{t\in\mathbb R}$, for $B_t\in H_t$  is  the family $\Omega=\{\omega_t(\EuScript B)\}_{t\in\mathbb R}$, where
\begin{equation}
\omega_t(\EuScript B)=\bigcap\limits_{y\le t}\overline{\bigcup\limits_{\tau\le y}U(t, \tau)B_\tau}.
\end{equation}
\end{definition}
\begin{definition}
\label{d5}
The family of uniformly  bounded  sets $\EuScript K=\{K_t\}_{t\in\mathbb R}$ is pullback attracting if for every uniformly bounded family $ \EuScript B=\{B_t\}_{t\in\mathbb R}$  
$$\lim\limits_{\tau\to-\infty}\delta_t(U(t,\tau)B_\tau, K_t)=0,$$
 where $\delta_t(B,C)=\sup\limits_{x\in B}\inf\limits_{y\in C}\|x-y\|_{H_t}$ denotes the Hausdorff semidistance.
\end{definition}
Now we are in position to define the pullback attractor.
\begin{definition}
\label{d6}
A process is asymptotically compact if there exists a pullback attracting family of compact  sets $\EuScript K=\{K_t\}_{t\in\mathbb R}$,  $K_t\in H_t$.
\end{definition}
\begin{definition}
\label{d7}
Pullback attractor is the smallest element of pullback attracting families  $\mathbb K=\{\EuScript K=\{K_t\}_{t\in\mathbb R}\}$, where  $K_t\subset H_t$ are compact in the corresponding spaces.
\end{definition}
The classical approach (see, e.g. \cite{}) to verification of asymptotic compactness of a process consists in finding a decomposition $U(t, \tau)=U_0(t, \tau)+U_1(t, \tau)$ with the properties 
$$\|U_0(t, \tau)z\|_{H_t}\le Ce^{-\delta(t-\tau)},\;C,\delta>0,\;z\in H_\tau$$ and 
$$\sup\limits_{t\ge \tau}\|U_1(t, \tau)z\|_{R_t}\le M,$$
where $R_t$ is a compactly embedded into $H_t$ in Banach space. However, for the system considered it is not obvious how to get such a decomposition due to strong coupling (fluid and plate components cannot be splitted in terms of constraction of Galerkin approximations). Therefore, we need to derive another criterion for   asymptotic compactness. In order to do this we adjust to our situation the method of compensated compactness.  
\begin{theorem}
\label{pr:com}
Let $\EuScript D=\{D_t\}_{t\in\mathbb R}$ be a time-dependent  absorbing family of a process $U(t,\tau):H_\tau\to H_t$ and for any $\varepsilon>0$  there exists $T_0=T_0(\varepsilon)>0$ such that for any $y_1, y_2\in D_{t-T_0}$
\begin{equation}
\label{ac}
\|U(t,t-T_0)y_1-U(t,t-T_0)y_2\|_{H_t}\le \varepsilon+\Phi_{T_0, t}(y_1, y_2), 
\end{equation}
where the function  $\Phi_{T_0, t}(y_1, y_2): D_{t-T_0}\times D_{t-T_0}\to \mathbb R$ possesses the property 
\begin{equation}
\label{p}\liminf\limits_{n\to\infty}\liminf\limits_{m\to\infty}\Phi_{T_0, t}(y_n, y_m)=0
\end{equation}
for any sequence $\{y_n\}\in D_{t-T_0}$.
\end{theorem}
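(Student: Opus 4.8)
The conclusion to be established is that, under \eqref{ac}--\eqref{p}, the process $U(t,\tau)$ is asymptotically compact in the sense of Definition~\ref{d6}, and that $\Omega=\{\omega_t(\EuScript D)\}_{t\in\mathbb R}$ is then the pullback attractor of $U$ in the sense of Definition~\ref{d7}. The plan is to extract convergent subsequences straight from \eqref{ac}, using \eqref{p} only via a nested re-selection of subsequences, and never through a splitting of $U(t,\tau)$.

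First I would record the following consequence of \eqref{p}. By \eqref{ac} the function $\Phi_{T_0,t}$ is bounded below (by $-\varepsilon$) on $D_{t-T_0}\times D_{t-T_0}$, so for any sequence $\{z_n\}\subset D_{t-T_0}$ one can iterate \eqref{p}: choose $n_1$ with $\liminf_m\Phi_{T_0,t}(z_{n_1},z_m)<1$ and an infinite set $M_1$ of indices $m>n_1$ with $\Phi_{T_0,t}(z_{n_1},z_m)<1$; applying \eqref{p} to the subsequence indexed by $M_1$ gives $n_2\in M_1$ and an infinite $M_2\subset M_1$ with $\Phi_{T_0,t}(z_{n_2},z_m)<1/2$ for $m\in M_2$; iterating, the diagonal indices $n_1<n_2<\dots$ satisfy $\Phi_{T_0,t}(z_{n_k},z_{n_l})<1/k$ for $l>k$. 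Since \eqref{ac} enters only through the symmetric quantity $\|U(t,t-T_0)z_{n_k}-U(t,t-T_0)z_{n_l}\|_{H_t}$, this produces a subsequence along which $\limsup_{k,l\to\infty}\|U(t,t-T_0)z_{n_k}-U(t,t-T_0)z_{n_l}\|_{H_t}\le\varepsilon$.

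Next I would prove pullback asymptotic compactness. Fix $t\in\mathbb R$, a uniformly bounded family $\EuScript B=\{B_s\}$ with $B_s\subset\mathbb B_s(R)$, a sequence $\tau_n\to-\infty$ and points $b_n\in B_{\tau_n}$, and set $u_n:=U(t,\tau_n)b_n$. Run a diagonal procedure over $\varepsilon=1/k$. With $T_k:=T_0(1/k)$, Definition~\ref{d4} furnishes $\Theta(R)$ so that $z_n^{(k)}:=U(t-T_k,\tau_n)b_n\in D_{t-T_k}$ whenever $\tau_n\le t-T_k-\Theta(R)$, i.e. for all $n$ large, and Definition~\ref{d1} gives $u_n=U(t,t-T_k)z_n^{(k)}$. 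At stage $k$, starting from the infinite index set $\mathcal N_{k-1}$ produced at stage $k-1$, apply the extraction of the preceding paragraph to $\{z_n^{(k)}\}_{n\in\mathcal N_{k-1}}$ and combine it with \eqref{ac} to obtain an infinite $\mathcal N_k\subset\mathcal N_{k-1}$ with $\limsup_{n,m\in\mathcal N_k}\|u_n-u_m\|_{H_t}\le 1/k$. Letting $n_k^\ast$ be the $k$-th element of $\mathcal N_k$, the tail $\{n_k^\ast:k\ge j\}$ lies in $\mathcal N_j$ for every $j$, hence $\limsup_{k,l\to\infty}\|u_{n_k^\ast}-u_{n_l^\ast}\|_{H_t}\le 1/j$ for all $j$; so $\{u_{n_k^\ast}\}$ is Cauchy and converges in $H_t$.

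Finally I would conclude. Applying pullback asymptotic compactness with $\EuScript B=\EuScript D$ and diagonalizing once more shows $\omega_t(\EuScript D)$ is nonempty and precompact, and it is closed as an intersection of closed sets, hence compact. It pullback-attracts every uniformly bounded $\EuScript B$: if not, there are $\eta>0$, $\tau_n\to-\infty$ and $b_n\in B_{\tau_n}$ with $\delta_t(\{U(t,\tau_n)b_n\},\omega_t(\EuScript D))\ge\eta$; writing $s_n:=\tau_n+\Theta(R)\to-\infty$ and $d_n:=U(s_n,\tau_n)b_n\in D_{s_n}$ one has $U(t,\tau_n)b_n=U(t,s_n)d_n$, a convergent subsequence of which exists by the previous step, and its limit lies in $\omega_t(\EuScript D)$ by Definition~\ref{d31} — a contradiction. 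Positive invariance and minimality among compact pullback-attracting families are then verified in the standard way, giving the pullback attractor of Definition~\ref{d7}. The main obstacle is the middle step: \eqref{ac}--\eqref{p} is only a one-step estimate together with the weak double-$\liminf$ property \eqref{p}, and \eqref{p} applied to the image sequence $\{u_n\}$ by itself does not make $\{u_n\}$ precompact; what rescues the argument is that \eqref{p} holds for \emph{every} sequence in $D_{t-T_0}$, so it can be re-applied along successive subsequences, which is exactly what the nested extraction exploits.
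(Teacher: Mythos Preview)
Your argument is correct and reaches the same conclusion as the paper, but the route is genuinely different. The paper argues via the Kuratowski measure of noncompactness $\alpha$: after replacing $\EuScript D$ by a positively invariant absorbing family, it writes $\omega_t(\EuScript D)=\bigcap_{k}C_k^t$ with $C_k^t=\overline{U(t,t-kT)D_{t-kT}}$, observes $C_{k+1}^t\subset C_k^t$ and hence $\alpha(C_{k+1}^t)\le\alpha(C_k^t)$, and proves $\alpha(C_k^t)\to 0$ by contradiction --- if $\alpha(C_k^t)>6\varepsilon_0$ for all $k$, then $\alpha\big(U(t,t-T_0)D_{t-T_0}\big)>6\varepsilon_0$ for $T_0=T_0(\varepsilon_0)$, which yields a $2\varepsilon_0$-separated sequence $\{U(t,t-T_0)y_n\}$, and \eqref{ac} forces $\Phi_{T_0,t}(y_n,y_m)\ge\varepsilon_0$ for all $n\ne m$, contradicting \eqref{p} in one stroke. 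Your approach instead extracts convergent subsequences directly, through a two-layer diagonal procedure: first a nested re-selection inside a fixed $D_{t-T_0}$ (re-applying \eqref{p} along successive subsequences) to make the images $\varepsilon$-close along a subsequence, then an outer diagonal over $\varepsilon=1/k$ to manufacture a genuinely Cauchy subsequence of $\{U(t,\tau_n)b_n\}$. The paper's version is shorter and uses \eqref{p} exactly once, at the cost of invoking the $\alpha$-machinery and the standard fact that a decreasing nest of closed sets with $\alpha\to 0$ has nonempty compact intersection; your version is more elementary and self-contained, and it also spells out the pullback attraction of arbitrary uniformly bounded families and the minimality of $\{\omega_t(\EuScript D)\}$, which the paper leaves implicit.
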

\begin{proof}
We can assume without loss of generality that $\EuScript D$ is positively invariant. Otherwise, we can substitute $D_t$ with $\bigcup\limits_{\tau\le t-\Theta}U(t,\tau)D_\tau\subset D_t$ .\par
We fix $T>0$.  Obviously, we have a representation 
\[\omega_t(\EuScript D)=\bigcap\limits_{k\in\mathbb N}C_k^t,\]
where
\[C_k^t=\overline{U(t, t-kT)D_{t-kT}}.\]   
Now we need to check that 
\begin{equation}
\label{inkl}
C_{k+1}^t\subset C_k^t.
\end{equation}
Indeed, due to the invariance of the family $\EuScript D$ we have $U(t-kT, t-(k+1)T)D_{t-(k+1)T}\subset D_{t-kT}$, consequently,
\begin{multline*}C_{k+1}^t=\overline{U(t, t-(k+1)T)D_{t-(k+1)T}}\\=\overline{U(t, t-kT)U(t-kT, t-(k+1)T)D_{t-(k+1)T}}\subset  \overline{U(t, t-kT)D_{t-kT}}=C_{k}^t.
\end{multline*}
Therefore, we have a sequence of nonempty closed sets 
$$
C_1^t\supset C_2^t\supset...\supset C_{k}^t\supset C_{k+1}^t \supset...
$$ 

To show that $\omega_t(\EuScript D)$ is nonemty and compact it remains to prove that 
\begin{equation}
\label{alphalim} 
\lim\limits_{k\to\infty}\alpha(C_k^t)=0.
\end{equation}
Due to \eqref{inkl} 
\[\alpha(C_k^t)=\alpha(C_k^t\cup C_{k+1}^t)=\max\left(\alpha(C_k^t), \alpha(C_{k+1}^t)\right),\]
consequently, 
\begin{equation}\label{ainkl}
\alpha(C_k^t)\ge \alpha(C_{k+1}^t) 
\end{equation}
for any $k\in\mathbb N$. It follows readily from \eqref{ainkl} that to show \eqref{alphalim} 
 it is sufficient to prove that  for any $\varepsilon>0$ there exists $k_0\in\mathbb N$ such that $\alpha(C_{k_0}^t)\le \varepsilon$.\par
 Now we use the contradiction argument.  Let there exists $\varepsilon_0>0$ such that for any $k\in \mathbb N$ 
\begin{equation}
\label{ep} 
\alpha(C_k^t)>6\varepsilon_0.
\end{equation}
For this $\varepsilon_0$ we choose $T_0=T_0(\varepsilon_0)$ such that \eqref{ac},  \eqref{p} hold. There exist $k_0\in\mathbb N$ and $0<\delta_0<T$ such that $T_0=k_0T-\delta_0$. We use the notation $\EuScript L_0=U(t, t-T_0)D_{t-T_0}=U(t, t-k_0T+\delta_0)D_{t-k_0T+\delta_0}$. Then,
\begin{multline}
\label{l}
C_{k_0}^t=U(t, t-k_0T)D_{t-k_0T}=U(t, t-k_0T+\delta_0)U(t-k_0T+\delta_0, t-k_0T)D_{t-k_0T}\\\subset U(t, t-T_0) D_{t-T_0}=\EuScript L_0.
\end{multline}
It follows from \eqref{ep} and \eqref{l} that 
\[\alpha(\EuScript L_0)\ge \alpha(C_{k_0}^t)>6\varepsilon_0.\]
This implies that there exists a sequence $\{y_n\}_{n=1}^\infty\in D_{t-T_0}$ such that for any $n,m\in \mathbb N$ such that $n\ne m$
\[2\varepsilon_0\le\|U(t,t-T_0)y_n-U(t,t-T_0)y_m\|_{H_t}\le \varepsilon_0+\Phi_{T_0, t}(y_n, y_m),\]
\end{proof}
and, therefore, 
\[\Phi_{T_0, t}(y_n, y_m)\ge \varepsilon_0,\]
which contradicts to \eqref{p}. This means that $\Omega=\{\omega_t(\EuScript D)\}_{t\in\mathbb R}$ is a pullback attracting family of compact sets.
 \section{Well-posedness and existense of absorbing set.}
In this section we prove the existence and uniqueness of solutions to the problem considered, generation of a continuous process, and existence of its time-dependent absorbing set. 
We introduce the scale of phase spaces
\[H_t=X_t\times \widehat {H_0^2(\Omega)}\times \widehat{L_t^2(\Omega)}\]
equipped with the norm
\[\|W\|_{H_t}^2=\mu(t)\|v\|_{L^2(\EuScript O)}^2+\rho(t)\|u\|_{L^2(\Omega)}^2+\|u_t\|_{L^2(\Omega)}^2,\;\;W=(v,u,u_t).\]
Now we impose assumptions on the parameters of problem \eqref{fl.1}--\eqref{IC} (cf. \cite{Conti, Ma}).\par
{\bf Assumptions on $\mu$ and $\rho$.}\par
\begin{enumerate}
\item[(A1)] $\mu(t), \;\rho(t)>0$.
\item[(A2)] $\mu(t), \rho(t) \in C^1(\mathbb R)$ are decreasing functions. 
\item[(A3)] There exists $L>0$ such that
 \[\sup\limits_{t\in \mathbb R}(|\mu(t)|+|\mu'(t)|+|\rho(t)|+|\rho'(t)|)\le L.\] 
\item[(A4)] $\lim\limits_{t\to +\infty}\mu(t)=0$, $\lim\limits_{t\to +\infty}\rho(t)=0$.
\end{enumerate}
{\bf Assumptions on  $\cF$.}\par
\begin{enumerate}
\item[(F1)] There exists $\epsilon>0$ such that $F$ is locally Lipschitz from $H_0^{2-\epsilon}(\Om)$ into $H^{-1/2}(\Om)$, i.e.
\[\| F(u_1)-F(u_2)\|_{-1/2,\Omega}\le  C_R\|u_1-u_2\|_{2-\epsilon, \Omega},\]
for any  $u_1, u_2\in H_0^2(\Omega)$ possessing the property $\|u_i\|_{2,\Omega}\le R, i=1,2$.   
\item[(F2)] There exists a $ C^1$ - functional $\Pi(u)$ on $H_0^2(\Omega)$ such that $F(u)=\Pi'(u)$ and
$\Pi(u)\le Q(\|u\|_{2,\Omega})$, where the fuction  $Q$ is increasing. 
\item[(F3)] There exist $0<\nu<1$ and  $C\ge 0$ such that
 \[(1-\nu)\|\Delta u\|_\Om^2+\Pi(u)+C\ge 0\]
for any $u\in H_0^2(\Omega)$.
\item[(F4)]  There exist $a_1, a_2\ge 0$ and $0<\nu<1$ such that
 \[(F(u), u)\ge a_1\Pi(u)-a_2-(1-\nu)\|\Delta u\|_\Omega^2.\] 
\end{enumerate}
{\bf Assumptions on  $f$ and $g$.}\par
\begin{enumerate}
\item[(G1)]  $ f\in L_{\it{loc}}^2(\mathbb R;Y')$, $ g\in L_{\it{loc}}^2(\mathbb R;H^{-1/2}(\Omega))$.
\item[(G2)]  There exist $\sigma_0,\; C_{f,g}>0$, such that for any  $t\in \mathbb R$ and $\sigma\in [0,\sigma_0]$
 \[\int\limits_{-\infty}^t e^{-\sigma(t-s)}\left(\|f(s)\|_{Y'}^2+\|g(s)\|_{-1/2,\Omega}^2\right)ds\le C_{f, g}.\]
\end{enumerate}
\begin{remark}
We note that assumption (A4) is ipmosed in order to consider the problem in time-dependent spaces. Otherwise, due to assumption (A2) we obtain the equivalence of the norms
\[\|W\|_{H_t}^2 \le \|W\|_{H_\tau}^2\le \max\left\{1,\frac{\mu(\tau)}{\rho(t)} , \frac{\rho(\tau)}{\rho(t)}\right\}\|W\|_{H_t}^2.\]
\end{remark}
\begin{remark}
The examples of function satisfying assumptions (G1), (G2) are periodic functions or $e^{-\kappa t},\;\;\kappa>0$.
\end{remark}
We define the spaces of test functions
$$
\EuScript L_T=\left\{\psi=(\phi, b): \left|\begin{array}{l}
\phi\in L^2(\tau,T;[H^1(\EuScript O)]^3), \phi_t\in L^2(\tau,T;[L_t^2(\EuScript O)]^3,\\
div \phi=0,\;\phi|_S=0,\;\phi|_\Omega=(0,0,b),\\
b\in L^2(\tau,T,\widehat{H_0^2(\Omega)}), \; b_t\in L^2(\tau,T,\widehat{L_t^2(\Omega)})  
\end{array}\right.\right\}
$$
and
$\EuScript L_T^0=\{\psi\in \EuScript L_T\, :\, \psi(t)=0\}.$
\par
In order to make our statements precise we need to introduce the definition of weak solutions  to problem \eqref{fl.1}--\eqref{IC}.
\begin{definition}
A pair of functions $(v(t),u(t))$ is said to be a weak solution to
problem \eqref{fl.1}--\eqref{IC} on a time interval $[\tau,t]$ if
 \begin{itemize}
 \item  $W(t)=(v(t), u(t), u_t(t))\in  L_\infty(\tau,T;H_t);$
\item $v\in  L_2(\tau,T;Y)$, $u_t\in L_2(\tau,T;[H_*^{1/2}(\Omega)]^2)$
\item $ u(\tau)=u_\tau^0;$
 \item  For almost all $ t\in [\tau,T]$
\begin{equation}
 \label{com}
v(t)|_{\Omega}=u_t(t);
\end{equation}
\item For every $\psi=(\phi, b)\in
\EuScript L_T^0$  the following equality holds
\begin{multline}
\label{sol_def}
-\int\limits_\tau^T\mu(t) (v, \phi_t)_\EuScript O dt-\frac12\int\limits_\tau^T\mu'(t) (v, \phi)_\EuScript O  dt+\int\limits_\tau^T\mu(t) (\nabla v, \nabla \phi)_\EuScript O  dt\\
-\int\limits_\tau^T\rho(t) (u_t, b_t)_\Omega dt-\frac12\int\limits_\tau^T\rho'(t) (u_t, b)_\Omega  dt+\int\limits_\tau^T (\nabla u, \nabla b)_\Omega  dt\\
=\int\limits_\tau^T (f(t), \phi)_\EuScript O dt-\int\limits_\tau^T (g(t), \phi)_\Omega  dt+\mu(\tau) ( v_\tau,  \phi(\tau))_\EuScript O\\
+\rho(\tau) (u_\tau^1, b(\tau))_\Omega.
\end{multline}
\end{itemize}
\end{definition}
The following theorem holds true
\begin{theorem}
\label{WP}
Under assumptions (A1)-(A4),  (F1)-(F3),  (G1)-(G2) problem \eqref{fl.1}--\eqref{IC} generates a strongly continuous process $U(t, \tau ) : H_\tau \to H_t$,
$t \ge \tau \in \mathbb R$, satisfying the following continuous dependence property: for every pair of
initial data $W_\tau^i = (v_\tau^i, u_\tau^{0i}, u_\tau^{1i}) \in H_\tau$ such that $\|W_\tau^i\|_{H_\tau} \le R, i = 1, 2$, $R>0$ the difference of the
corresponding solutions satisfies
\begin{equation}
\label{contin}
 \|U(t,  \tau)W_\tau^1 - U(t,  \tau)W_\tau^2\|_{H_t} \le e^{K(t-\tau)}\|W_\tau^1 - W_\tau^2\|_{H_\tau} , \;t \ge\tau,
\end{equation}
for some constant $K = K(R) \ge 0$.\par
The energy  equality
\begin{multline}\label{energy}
\sE(v(t), u(t), u_t(t))+ \int_\tau^t \|\g v\|^2 ds- \frac12\int_\tau^t \mu'(s) \| v\|^2 ds -\frac12 \int_\tau^t \rho'(s) \| u_s\|^2 ds \\=\sE(v_\tau, u_\tau^0, u_\tau^1)
+\int_\tau^t(f,  v)_\EuScript O ds +\int_\tau^t (g, u_s)_\Om ds
\end{multline}
holds
for every $t>\tau$, where the energy functional $\sE$ is defined
by the relation
\begin{equation}\label{en-def}
\sE(v, u, u_t)=E(v,u,u_t)+\int\limits_\Om\Pi(u)dx,
\end{equation}
here
\begin{equation}\label{en-def1}
E(v,u,u_t)=\frac12\left[\mu(t)\|v\|^2_\EuScript O+
\rho(t) \|u_t\|^2_\Om+\| \Delta u\|_\Om^2\right].
\end{equation}
\end{theorem}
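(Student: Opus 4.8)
The plan is to follow the classical Galerkin scheme, adapted to the time-dependent coefficients and to the strong kinematic coupling \eqref{com}. First I would construct a special basis: take $\{e_j\}$ to be eigenfunctions of the clamped biharmonic operator on $\widehat H^2_0(\Om)$, and for each $e_j$ solve a stationary Stokes problem on $\EuScript O$ with boundary data $(0,0,e_j)$ on $\Om$ and $0$ on $S$, obtaining a solenoidal field $\zeta_j\in Y$; the regularity of the Stokes problem recalled in Section~2 guarantees $\zeta_j$ is as smooth as needed. Approximate solutions are sought in the form $v^N=\sum_{j\le N}c_j^N(t)\zeta_j$, $u^N=\sum_{j\le N}c_j^N(t)e_j$ — using the \emph{same} coefficients is exactly what encodes $v^N|_\Om=u^N_t$, so the compatibility constraint is built in and the fluid and plate components genuinely cannot be decoupled. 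Plugging into the weak formulation \eqref{sol_def} with test functions from the span of $(\zeta_j,e_j)$ produces a system of ODEs for $c^N$; local existence follows from the Carathéodory theory (the coefficients $\mu,\rho$ are $C^1$ by (A2), $f,g$ are only $L^2_{loc}$ by (G1), hence Carathéodory rather than Peano), and global existence follows once the energy estimate below is in hand.

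The second step is the a priori energy identity. Testing the Galerkin system against $(v^N,u^N_t)$ — legitimate since these lie in the Galerkin span — and using $\di v^N=0$, the clamped boundary conditions, and the self-adjointness of $\De^2$, I would obtain exactly \eqref{energy} at the approximate level, with the extra terms $-\tfrac12\int\mu'\|v\|^2-\tfrac12\int\rho'\|u_s\|^2$ arising from differentiating the time-dependent quadratic energy (these match the $-\tfrac12\mu'$, $-\tfrac12\rho'$ terms in the weak formulation). By (A2) these two integrands are nonnegative, so they help rather than hurt; (F2)--(F3) control $\int_\Om\Pi(u)$ from below so that $\sE$ dominates $\tfrac12\|W\|_{H_t}^2$ minus a constant; and the forcing terms are absorbed by Cauchy--Schwarz, $\|\g v\|^2$ on the left, and Gronwall, using (A3) to keep $\mu,\rho$ and their derivatives bounded. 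This yields uniform-in-$N$ bounds for $v^N$ in $L_\infty(\tau,T;X_t)\cap L_2(\tau,T;Y)$ and for $(u^N,u^N_t)$ in $L_\infty(\tau,T;\widehat H^2_0(\Om)\times \widehat L^2_t(\Om))$, whence also $u^N_t\in L_2(\tau,T;[H^{1/2}_*(\Om)]^2)$ by the trace theorem (the trace of $v^N\in Y$ on $\Om$ lies in that space). Passing to a weakly-$*$ convergent subsequence and using the Aubin--Lions lemma for strong $L_2$ convergence of $u^N$ in $H^{2-\epsilon}_0(\Om)$, I can pass to the limit in every linear term directly and in the nonlinear term $\cF(u^N)$ using (F1) (local Lipschitz from $H^{2-\epsilon}_0$ into $H^{-1/2}$) together with the strong convergence; this produces a weak solution in the sense of the Definition.

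The third step is uniqueness and the continuous-dependence bound \eqref{contin}, which simultaneously gives the process property $U(t,s)U(s,\tau)=U(t,\tau)$ and strong continuity in $t$. Given two solutions with data of norm $\le R$, set $w=v^1-v^2$, $z=u^1-u^2$; the difference solves the same system with right-hand side $\cF(u^1)-\cF(u^2)$ and zero forcing. Testing with $(w,z_t)$ — which is justified once one knows the solutions have enough regularity, the standard delicate point here, handled either by the energy equality \eqref{energy} (proved for the limit solution by a separate limiting argument, or by the Lions--Magenes regularization in time) or by working at the Galerkin level and passing to the limit — gives
\[
\frac12\frac{d}{dt}\|(w,z,z_t)\|_{H_t}^2+\|\g w\|_\EuScript O^2\le \frac12\mu'\|w\|^2+\frac12\rho'\|z_t\|^2+|(\cF(u^1)-\cF(u^2),z_t)_\Om|.
\]
The first two terms on the right are $\le 0$; the nonlinear term is estimated by (F1) as $C_R\|z\|_{2-\epsilon,\Om}\|z_t\|_{-1/2,\Om}^{\ast\prime}$ — more precisely by duality $|(\cF(u^1)-\cF(u^2),z_t)_\Om|\le \|\cF(u^1)-\cF(u^2)\|_{-1/2,\Om}\|z_t\|^{\ast}_{1/2,\Om}$, and $\|z_t\|^{\ast}_{1/2,\Om}\le C\|\g w\|_\EuScript O$ by the trace theorem — so with Young's inequality one half of $\|\g w\|_\EuScript O^2$ absorbs it and what remains is $\le C_R\|z\|_{2-\epsilon,\Om}^2$. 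Interpolating $\|z\|_{2-\epsilon,\Om}^2\le C\|z\|_{2,\Om}^{2-2\epsilon'}\|z\|_0^{2\epsilon'}\le C_R\|(w,z,z_t)\|_{H_t}^2$ (using $\mu(t),\rho(t)$ bounded below on the compact interval $[\tau,t]$ to compare norms), Gronwall gives \eqref{contin} with $K=K(R)$; uniqueness is the case of equal data. Finally the energy equality \eqref{energy} for the constructed solution follows by the same $(v,u_t)$-testing made rigorous through a standard mollification-in-time argument using the available regularity. The main obstacle I anticipate is precisely this justification of testing the difference equation (and the energy identity) with $(w,z_t)$: because there is no mechanical damping in the plate and only the weak regularity $u_t\in L_2(\tau,T;[H^{1/2}_*(\Om)]^2)$ is available, one must either carry the estimate through the Galerkin approximations before taking limits, or invoke a Lions--Magenes-type lemma to give meaning to $\frac{d}{dt}\|(w,z,z_t)\|_{H_t}^2$; getting the duality pairing $(\cF(u^1)-\cF(u^2),z_t)_\Om$ to make sense and be absorbed by $\|\g w\|^2$ is what forces the precise exponent $2-\epsilon$ in hypothesis (F1).
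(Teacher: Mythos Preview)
Your overall strategy---Galerkin scheme, energy identity at the approximate level, Aubin--Lions compactness for the nonlinearity, and a Gronwall argument from (F1) for \eqref{contin}---matches the paper's. But your Galerkin construction has a genuine gap. You write $v^N=\sum_{j\le N}c_j^N(t)\zeta_j$ and $u^N=\sum_{j\le N}c_j^N(t)e_j$ with the \emph{same} coefficients, claiming this encodes $v^N|_\Om=u^N_t$. It does not: since $\zeta_j|_\Om=(0,0,e_j)$ by construction, your ansatz yields $v^N|_\Om=(0,0,u^N)$, not $(0,0,u^N_t)$; the lifted part of the velocity must carry the \emph{time derivatives} of the plate coefficients. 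More importantly, even after that fix your velocity space is spanned only by the Stokes liftings $\zeta_j$ and therefore contains no nonzero field with vanishing trace on all of $\partial\EuScript O$. The fluid has interior degrees of freedom independent of the plate: if $u_\tau^1=0$ but $v_\tau\neq 0$, your scheme cannot approximate the initial velocity. The paper resolves this with a \emph{two-index} Galerkin system: Stokes eigenfunctions $\{e_i\}$ with homogeneous boundary data on $\partial\EuScript O$ (coefficients $\alpha_i$) together with liftings $\varphi_j=Ng_j$ of the biharmonic eigenfunctions $g_j$ (coefficients $\dot\beta_j$), so that
\[
v_{n,m}=\sum_{i\le m}\alpha_i(t)e_i+\sum_{j\le 2n}\dot\beta_j(t)\varphi_j,\qquad u_n=\sum_{j\le 2n}\beta_j(t)g_j,
\]
and the projected initial velocity is $\Pi_m(v_\tau-Nu_\tau^1)+NP_nu_\tau^1$. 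Without the $e_i$-component the approximations cannot converge to the true solution.

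A secondary point: for the energy equality on the \emph{limit} solution the paper does not rely on a vague mollification but on the concrete Koch--Lasiecka finite-difference test function $\phi=\frac{1}{2h}\int_{t-h}^{t+h}v(s)\,ds$, whose boundary trace is $b=D_hu$; this is exactly the device that makes the pairings legitimate when one only has $u_t\in L_2(\tau,T;H^{1/2}_*(\Om))$ and $v\in L_2(\tau,T;Y)$. You correctly flag this as the delicate step, but the mechanism should be named.
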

\begin{proof}
The proof is quite standard and relies on the method of Galerkin approximations (see e.g.). We place it here for the sake of completeness.  \par
{\em Step 1. Existence.} \par
Let $\{e_i=(e_{1i}, e_{2i})\}_{i\in \N}$ be  the orthonormal basis in $\widetilde X_t=\{v\in X_t: \; (v,n)_\Om=0\}$
consisting of the eigenvectors of the Stokes problem:
\begin{equation}
\label{stokes}
-\De e_i +\nabla p_i =\lambda_i e_i  \quad \mbox{in} \; \EuScript O,~~~
{\rm div}e_i=0, \quad e_i|_{\pd\EuScript O}=0,
\end{equation}
where  $0<\lambda_1\le \lambda_2\le \cdots$ are the corresponding eigenvalues. The existence of solutions to \eqref{stokes} can be shown in the same way as in \cite{temam-NS}.\par
We define the operator 
$N: [\widehat L^2(\Om)]^2 \mapsto [H^{1/2}(\EuScript O)]^2$
by the formula
\begin{equation}\label{fl.n0}
Nu=v ~~\mbox{iff}~~\left\{
\begin{array}{l}
 -\Delta v+\nabla p=0, \quad
   \di v=0 \quad {\rm in}\quad \EuScript O;
\\
 v=0 ~~ {\rm on}~\pd\EuScript O\setminus \Om;
\quad
v=u~~{\rm on} ~ \Om.
\end{array}\right.
\end{equation}
Operator $N$ possesses properties \cite{}
\begin{equation}
\label{n}
N:\, [\widehat H^s_*(\Om)]^2\mapsto [H^{1/2+s}(\EuScript O)]^3\cap X_t
\end{equation}
continuously for every $s\ge -1/2 $ and
\begin{equation}
\label{n1}
\|Nu\|_{1/2+s, \EuScript O}\le C\|u\|^*_{s, \Om},\quad u\in [H^s_*(\Om)]^2.
\end{equation}\par
We also introduce a positive self-adjoint operator $A=\Delta^2$ with the domain $\sD(A)= (H^4\cap \widehat H_0^2)(\Omega)$. It is easy to see that $\sD(A^{1/2})=\widehat H_0^2(\Omega)$.
Denote by $\{g_i\}_{i\in\N}$ the orthonormal basis in $\widehat L_2(\Om)$
which consists of eigenfunctions of the operator $A$
\begin{equation}
\label{101}
Ag_i=\kappa_i g_i
\end{equation}
with the eigenvalues $0<\kappa_1\le\kappa_2\le\ldots$.\par
 Let  $\varphi_i=N g_i$,  where the operator $N$
is defined by (\ref{fl.n0}).\par
We define an approximate solution as a pair of functions $(v_{n,m}; u_n)$:
\begin{equation}
v_{n,m}(t)=\sum_{i=1}^m \alpha_i(t)e_i +\sum_{j=1}^{2n} \dot{\beta}_j(t)\varphi_j, \quad u_n(t)=\sum_{j=1}^{2n}\beta_j(t)g_j \label{approx_sol}
\end{equation}
which satisfy the relations
\begin{equation}
\mu(t)\left(\dot{\alpha}_k(t) +\sum_{j=1}^{2n} \ddot{\beta}_j(t)(\varphi_j,e_k)_\EuScript O\right)+\lambda_k \alpha_k(t)+\sum_{j=1}^{2n} \dot{\beta}_j(t)(\g \varphi_j, \g e_k)_\EuScript O
=(f, e_k)_\EuScript O  \label{e_eq}
\end{equation}
for $k=1,...,m$, and
\begin{multline}
\mu(t)\left(\sum_{i=1}^m \dot{\alpha}_i(t)(e_i, \varphi_k)_\EuScript O+\sum_{j=1}^{2n} \ddot{\beta}_j(t)(\varphi_j, \varphi_k)_\EuScript O\right)+\rho (t)\ddot{\beta}_k(t)\\ +
\sum_{i=1}^m  \alpha_i(t)(\g e_i, \g \varphi_k)_\EuScript O +
\sum_{j=1}^{2n} \dot{\beta}_j(t) (\g e_j, \g \varphi_k)_\EuScript O
+\kappa_k \beta_k(t)+\\
  +(F(u_n(t)), g_k) =
(f(t), \varphi_k)_\EuScript O +(g(t), g_k)_\Om  \label{phi_eq}
\end{multline}
for $k=1,\dots,2n$. 
This system of ordinary differential equations \eqref{e_eq}--\eqref{phi_eq}  is endowed
with the initial data
\[
v_{n,m}(\tau)=\Pi_m(v_\tau-Nu_\tau^1)+NP_n u_\tau^1,
\]
\[
 u_n(\tau)=P_nu_\tau^0, \;\; \dot{u}_n(\tau)=P_nu_\tau^1 ,
\]
 where $\Pi_m$  is  an
 orthoprojector  on $Lin\{e_j: j=1,\ldots,m,\}$ in $\widetilde{X_t}$,  $P_n$
is an orthoprojector on
$Lin\{g_i : i=1,\ldots,n\}$ in $\widehat L_t^2(\Om)$.
Since $\Pi_m$
and $P_n$  are spectral projectors we have that
\begin{equation}\label{id-conv}
(v_{n,m}(\tau); u_n(\tau); \dot{u}_n(\tau))\to  (v_\tau;u_\tau^0;u_\tau^1),\;\text{strongly in }\; H_\tau,\; m,n\to\infty.
\end{equation}
Arguing as in \cite{ChuRyz2011} we infer that system \eqref{e_eq} and \eqref{phi_eq} has a unique  solution
on any time interval $[\tau,T]$.
\par
It follows from (\ref{approx_sol})  that
\[
v_{n,m}(t)=\sum_{i=1}^m \alpha_i(t)e_i + N[\pd_t u_n(t)],
\]
where $N$ is given by (\ref{fl.n0}). This implies
the following boundary compatibility condition
\begin{equation}\label{nm-comp}
v_{n,m}(t)=\pd_t u_n(t)~~ \mbox{on}~~ \Om.
\end{equation}
Multiplying \eqref{e_eq} by $\al_k(t)$ and \eqref{phi_eq} by
$\dot\beta_k(t)$, after summation we obtain an energy relation
of the form \eqref{energy} for the approximate solutions
$(v_{n,m}; u_n)$
(for a similar argument we refer to
 \cite{ChuRyz2011} ).
 Assumptions (A2), (F2), (F3), (G1)  together with  the trace theorem  imply the following a priori estimate:
 \begin{multline}\label{a-pri0}
 \sup_{t\in [\tau,T]}\left[\mu(t)\|v_{n,m}(t)\|_\EuScript O^2 +\rho(t)\| \pd_t u_{n}(t)\|_\Om^2+
 \|\Delta u_n(t)\|^2_\Om\right]\\
+\int_\tau^T\|\nabla v_{n,m}(t)\|_\EuScript O^2 dt + \int_\tau^T \| \pd_t u_{n}(t)\|_{[H_*^{1/2}(\Om)]^2}^2 dt
 \le C(T, \|\cW_\tau\|_{H_\tau}^2)
 \end{multline}
 for any existence interval $[\tau,T]$ of approximate solutions,
 where the constant $C(T, \|\cW_\tau\|_{H_\tau})$ does not depend on $n$ and $m$.
 In particular, this implies that
  any approximate solution  can be extended on any time interval by the standard procedure, i.e., the solution is global.

 It also follows from \eqref{a-pri0} that the sequence  $\{(v_{n,m}; u_n; \pd_t u_n)\}$ contains a subsequence  such that
\begin{align}
&(v_{n,m}; u_n; \pd_t u_n) \rightharpoonup (v; u; \pd_t u) \quad \ast\mbox{-weakly in } L_\infty(\tau,T; H_t),\label{ux-conv}
 \\
&v_{n,m} \rightharpoonup v \quad \mbox{weakly in } L_2(\tau,T;Y).   \label{u_conv}
\end{align}
Moreover, by the Aubin-Dubinsky  theorem
(see, e.g., \cite[Corollary~4]{Simon}) we can assert that
\begin{align}
&u_n \rightarrow u \quad \mbox{strongly in } C(\tau,T; \widehat  H^{2-\epsilon}_0(\Om))
\label{u-strong}
\end{align}
for every $\epsilon>0$. Besides, the  trace theorem yields
\begin{equation}\label{xt-conv}
\pd_t u_n \rightharpoonup \pd_t u \quad \mbox{weakly in } L_2(\tau,T; [H^{1/2}_*(\Om)]^2).
\end{equation}
One can  see  that
$(v_{n,m}; u_n; \pd_t u_n)(t)$ satisfies \eqref{sol_def} with
the test function $\phi$ of the form
\begin{equation}\label{phi-pq}
\phi=\phi_{l,q}=\sum_{i=1}^l \gamma_i(t)e_i +\sum_{j=1}^q\delta_j(t)\varphi_j,
\end{equation}
where $l\le m$, $q\le n$ and $\gamma_i$, $\delta_j$
are scalar absolutely continuous functions on $[\tau,T]$
such that $\dot{\ga}_i,\dot{\delta}_j\in L_2(\tau,T)$ and $\gamma_i(T)=\delta_j(T)=0$. Thus using (\ref{ux-conv})--
(\ref{u_conv}) we can pass to the limit and
show that $(v; u; \pd_t u)(t)$ satisfies \eqref{e_eq}--\eqref{phi_eq}
with  $\phi=\phi_{l,q}$, where $l$ and $q$ are arbitrary.
By (\ref{id-conv}) and (\ref{u-strong}) we have $\cW(\tau)=\cW_\tau$.
Compatibility condition \eqref{com} follows from  (\ref{nm-comp})
and  (\ref{xt-conv}).
\par
To conclude the proof of the existence of weak solutions
we only need to show that any function $\psi$ in $\EuScript L_T^0$ can be approximated by
a sequence of functions of the form (\ref{phi-pq}). This can be done in the following way. We first approximate the corresponding boundary value of $b$
by a finite linear combination $h$ of $\xi_j$, then we  approximate the difference $\psi-Nh$ (with $N$ define by (\ref{fl.n0}))
by finite linear combination of $e_i$. Limit transition in nonlinear terms is quite standard, so we omit it here.
Thus the existence of weak solutions is proved.\par
{\em Step 2. Energy equality.} \par
To prove the energy equality for a weak solution we follow the scheme
presented in  ~\cite{KochLa_2002}. 
We introduce a finite difference operator $D_h$, depending on a small parameter $h$.
Let $g$ be a bounded function on $[\tau,T]$ with values in some Hilbert space. We extend $g(t)$ for all $t\in \R$ by defining $g(t)=g(0)$ for $t<\tau$ and $g(t)=g(T)$ for $t>T$. With this extension we denote
\[
g^+_h(t)=g(t+h)-g(t), \quad g^-_h(t)=g(t)-g(t-h), \quad
D_h g(t)=\frac 1{2h}(g^+_h(t)+ g^-_h(t)).
\]
Properties of the operator $D_h$ are collected in Proposition 4.3 \cite{KochLa_2002}.
\par
Taking in (\ref{sol_def})
 $\phi(t)=\int_t^T\chi(s)ds\cdot \phi$, where $\chi$
is a smooth scalar function  and $\phi$ belongs to the space
\begin{equation}\label{space-W}
\widehat{Y}=\left\{
\phi\in  Y \left|  \;
  \phi|_\Om=
b \in \widehat H^2_0(\Om)  \right. \right\},
\end{equation}
one can see that the weak solution $(v(t); u(t))$  satisfies the relation
       \begin{multline}
       \label{wc}
          \mu(t) (v(t), \phi)_{\EuScript O}
+\rho(t)(u_t(t), b)_\Om
 = (v_\tau, \phi)_{\EuScript O} + (u_\tau^1, b)_\Om+
\int_\tau^t\big[
\frac12\mu'(s)(v, \phi)_{\EuScript O}\\+\frac12\rho'(s)(u_t, b)_{\Om}-(\g v, \g \phi)_{\EuScript O}-(\Delta u, \Delta b)_\Om +(F(u), b)_\Om+(f, \phi)_{\EuScript O}+(g, b)_\Om\big] ds
        \end{multline}
for  all $t\in [\tau,T]$ and
$\phi\in \widehat{Y}$  with $\phi\big\vert_\Om=b$.\par
 The vector $(v(t), u(t), u_t(t))$ is weakly continuous in $H_t$  for any weak solution  $(v(t), u(t))$ to problem \eqref{fl.1}--\eqref{IC}.
Indeed, it follows from (\ref{wc}) that
 $(v(t), u(t))$  satisfies the relation
       \[
            \mu(t)(v(t),\phi)_\EuScript O
 = \mu(\tau) (v_\tau, \phi)_{\EuScript O}
+\int_\tau^t\left[\frac12\mu'(s)(v, \phi)_{\EuScript O} -
  (\g v,\g \phi)_{\EuScript O}
          +  (f(s), \phi)_{\EuScript O}  \right] ds
        \]
for almost all $t\in [\tau,T]$ and for all
$\phi\in Y_0=\{ v\in Y :\, v|_\Om=0\}\subset\widehat {Y}\subset Y$,
where $\widehat{Y}$ is given by \eqref{space-W}.
This implies that $v(t)$ is weakly continuous in $Y_0'$.
Since $X_t\subset Y'_0$, for any $\tau<t<T$ we can apply  Lions lemma
(see \cite[Lemma 8.1]{LiMa_1968}) and conclude that $v(t)$ is
weakly continuous in $X_t$. The same lemma  gives us weak continuity
 of $u(t)$ in $\widehat H_0^2(\Om)$. Now using  (\ref{wc}) again with $\phi\in \widehat{Y}$
 we conclude that
$
 t\mapsto  (u_t(t), b)_\Om
$ is continuous
 for every $b\in \widehat  H_0^2(\Om)$. This implies that
 $
 t\mapsto  u_t(t)$ is weakly continuous
 in
$[L_2(\Om)]^2 $.
 Using weak continuity of weak solutions,
we can extend the variational relation in \eqref{sol_def} on the class of test functions
from $\EuScript L_T$ (instead of $\EuScript L_T^0$) by an appropriate limit transition. More precisely, one can show that
 any weak solution $(v; u)$  satisfies the relation
 \begin{multline}
\label{sol_def_t}
-\int\limits_\tau^T\mu(t)(v,\phi_t)_{\EuScript O}dt+\int\limits_\tau^T (\g v, \g \phi)_{\EuScript O}
dt-\int\limits_\tau^T\rho(t) (u_t, b_{t})_\Om
dt+\int\limits_\tau^T(F(u), b)  dt\\+\int\limits_\tau^T (
\Delta u, \Delta b)_\Om dt=(v_\tau,
\phi(\tau))_{\EuScript O}+(u_\tau^1, b(\tau))_\Om-\mu(T)(v(T),
\phi(T))_{\EuScript O}\\-\rho(T)(u_T(T), b(T))_\Om+\frac12\int\limits_\tau^T\mu'(t)(v,\phi)_{\EuScript O}dt+\frac12\int\limits_\tau^T\rho'(t) (u_t, b)_\Om
dt
\\+\int\limits_\tau^T(f, \phi)_{\EuScript O}
dt+\int\limits_\tau^T(g, b)_{\Omega}
dt,
\end{multline}
    for every  $\psi=(\phi, b)\in\EuScript L_T$ .\par Let $(v(t), (t))$ be a weak solution to problem  \eqref{fl.1}--\eqref{IC}.
Now  we use
\begin{equation}
\label{98}
\phi=\frac 1{2h}\int_{t-h}^{t+h} v(s) ds
\end{equation}
as a test function in \eqref{sol_def_t}. For the shell component we have test function $b=\phi|_\Om=D_h u$ -- the same one that used in \cite{KochLa_2002} for the full Karman model. \par
Arguing as in the proof of  Proposition 4.3 \cite{KochLa_2002} we can infer
\begin{multline}
\label{l1}
\lim\limits_{h\to 0} \left(\int\limits_\tau^T\mu(t)(v(t), D_h v(t))_{\EuScript O}dt-\frac12\int\limits_\tau^T\mu'(t)(v(t), \int\limits_{t-h}^{t+h} v(s)ds)_{\EuScript O} dt\right)\\=\frac12\left(\mu(T)\|v(T)\|_{\EuScript O}^2-\mu(\tau)\|v(\tau)\|_{\EuScript O}^2\right)
\end{multline}
\begin{multline}
\label{l2}
\lim\limits_{h\to 0} \left(\int\limits_\tau^T\rho(t)(u_{t}(t), D_h u_t(t))_\Om dt-\frac12\int\limits_\tau^T\rho'(t)(u_t(t),  D_h u(t))_\Om dt\right)\\=\frac12\left(\rho(T)\|u_T(T)\|_{\Om}^2-\rho(\tau)\|u_\tau(\tau)\|_{\Om}^2\right)
\end{multline}
Then, relying on \eqref{sol_def_t}, \eqref{l1}, and  \eqref{l2} we can conclude the proof. 
All the arguments for the fluid component in our model are the same as in \cite{cr-full-karman}, and the arguments for the plate component  are analogous to those presented in the proof of  Lemma 4.1 \cite{KochLa_2002}. 
  This makes it possible to prove the energy equality in \eqref{energy}.\par
Continuity of weak solutions with respect to $t$
can be obtained in the standard
way from the energy equality and weak continuity (see \cite[Ch.~3]{LiMa_1968}
and also \cite{KochLa_2002}).\par
{\em Step 3. Continuity with respect to the initial data and uniqueness.} \par
It follows from energy estimate \eqref{energy} and (F3) that if $\|W_\tau\|_{H_\tau}\le R$, then there exists $C(R)>0$ such that $\|U(t, \tau)W_\tau\|_{H_t}\le C(R)$.   Consequently,  the Gronwall  lemma and (F1) yield estimate \eqref{contin}. The uniqueness of solutions follows.
\end{proof}
Now we are in position to show the existence of a time-dependent absorbing family.
\begin{lemma}
\label{lem}
Let $t \ge \tau$ . Let $U(t, \tau )W_\tau$ be the solution of \eqref{fl.1}--\eqref{IC} with initial time
$\tau$ and initial data  $W_\tau \in H_\tau$. Then, if (F4) holds, there exist $\omega>0$,
$K \ge 0$ and an increasing positive function $Q$ such that
\begin{equation}
\label{dis}
\|U(t, \tau )W_\tau\|_{H_t} \le Q(\|W_\tau\|_{H_\tau})e^{-ω(t-\tau)} + K,\;\; \tau \le t.\end{equation}
\end{lemma}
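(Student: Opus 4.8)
The plan is to start from the energy equality \eqref{energy} established in Theorem~\ref{WP} and convert it into a differential inequality for a suitable Lyapunov-type functional that is equivalent to $\|W(t)\|_{H_t}^2$. First I would introduce the modified energy $\sV(t) = \sE(v(t),u(t),u_t(t)) + C$, where $C\ge 0$ is the constant from (F3), so that by (F2)--(F3) we have $\sV(t)\sim \|W(t)\|_{H_t}^2 + \|\Delta u(t)\|_\Om^2$ up to constants (using $\nu$-coercivity: $\frac12\|\Delta u\|_\Om^2 + \int_\Om\Pi(u) + C \ge \frac{\nu}{2}\|\Delta u\|_\Om^2$, and the upper bound $\sV(t)\le Q_1(\|W(t)\|_{H_t})$ via (F2)). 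Since $\mu,\rho$ are bounded above and below on any bounded time interval only, but (A4) forces them to $0$, the norm $\|W(t)\|_{H_t}$ may degenerate in the $v$ and $u_t$ slots; crucially, however, the plate-elastic term $\|\Delta u\|_\Om^2$ is \emph{not} weighted by a vanishing coefficient, which is what makes the argument work.

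Differentiating the energy equality (or rather using it in integrated form), I get, for $t\ge\tau$,
\[
\sV(t) + \int_\tau^t \|\g v\|^2\,ds = \sV(\tau) + \int_\tau^t \tfrac12\mu'(s)\|v\|^2\,ds + \int_\tau^t \tfrac12\rho'(s)\|u_s\|^2\,ds + \int_\tau^t (f,v)_{\EuScript O}\,ds + \int_\tau^t (g,u_s)_\Om\,ds.
\]
By (A2) the two terms with $\mu',\rho'$ are $\le 0$, hence harmless (they can be dropped). For the forcing terms I would use Young's inequality together with Poincaré/trace estimates: $|(f,v)_{\EuScript O}|\le \|f\|_{Y'}\|v\|_Y \le \tfrac12\|\g v\|^2 + \tfrac12\|f\|_{Y'}^2$, which is absorbed by the dissipative integral $\int_\tau^t\|\g v\|^2\,ds$; and $|(g,u_s)_\Om|\le \|g\|_{-1/2,\Om}\|u_s\|_{1/2,\Om}$. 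The latter is the delicate one since there is no mechanical damping on the plate: I would handle it by invoking the compatibility relation \eqref{com}, $u_t=v|_\Om$, together with the trace inequality $\|u_s\|_{1/2,\Om} = \|v|_\Om\|_{1/2,\Om}\le C\|v\|_Y$ (or $\le C\|\g v\|_{\EuScript O}$ after Poincaré), so that $|(g,u_s)_\Om|\le \eta\|\g v\|^2 + C_\eta\|g\|_{-1/2,\Om}^2$, again absorbed into the dissipation integral for $\eta$ small. This is the step I expect to be the main obstacle — the absence of plate damping means the only dissipation available is fluid viscous dissipation $\int\|\g v\|^2$, and one must route \emph{all} the plate forcing through the trace of $v$ via the coupling.

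Next, to create genuine exponential decay rather than just boundedness, I would add a small multiple of the auxiliary functional $\Psi(t) = \rho(t)(u_t(t),u(t))_\Om$ (a standard equipartition multiplier). Its derivative, computed from the equation for $u$, produces $\rho(t)\|u_t\|^2 - \|\Delta u\|_\Om^2 - (F(u),u)_\Om + \rho'(t)(u_t,u)_\Om + (g,u)_\Om + \text{(fluid coupling terms)}$; using (F4) to control $-(F(u),u)_\Om \le -a_1\Pi(u)+a_2+(1-\nu)\|\Delta u\|_\Om^2$, the term $-\|\Delta u\|_\Om^2 + (1-\nu)\|\Delta u\|_\Om^2 = -\nu\|\Delta u\|_\Om^2$ survives with a good sign and supplies coercivity in the plate-position variable. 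Setting $\sW_*(t) = \sV(t) + \varepsilon\Psi(t)$ with $\varepsilon$ small enough that $\sW_*(t)\sim \|W(t)\|_{H_t}^2$ (possible since $|\Psi(t)|\le \rho(t)\|u_t\|\|u\|\le C\|u_t\|_\Om^2 + C\|\Delta u\|_\Om^2$ by Poincaré and (A3)), and combining the energy identity with $\varepsilon$ times the identity for $\Psi$, I obtain
\[
\frac{d}{dt}\sW_*(t) + \omega\,\sW_*(t) \le C_0\bigl(\|f(t)\|_{Y'}^2 + \|g(t)\|_{-1/2,\Om}^2\bigr)
\]
for some $\omega>0$ and $C_0>0$ depending only on the structural constants (not on $\tau$, and all nonlinear dependence on the data sits in the one-time-only bound $\sV(\tau)\le Q(\|W_\tau\|_{H_\tau})$; after the first step when the trajectory enters a bounded set, (F1)-type Lipschitz constants are uniform). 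Finally I apply the Gronwall lemma in integrated form,
\[
\sW_*(t) \le \sW_*(\tau)e^{-\omega(t-\tau)} + C_0\int_\tau^t e^{-\omega(t-s)}\bigl(\|f(s)\|_{Y'}^2+\|g(s)\|_{-1/2,\Om}^2\bigr)\,ds,
\]
and bound the integral by $C_0 C_{f,g}$ using hypothesis (G2) (choosing $\omega\le\sigma_0$, which we may since $\omega$ can be taken as small as needed). Translating back through the equivalences $\sW_*\sim\|\cdot\|_{H_t}^2$ and $\sW_*(\tau)\le Q_2(\|W_\tau\|_{H_\tau})$ yields \eqref{dis} with $K = (C_0 C_{f,g})^{1/2}$ and the stated increasing function $Q$.
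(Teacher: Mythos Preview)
Your overall strategy---energy identity plus an equipartition-type multiplier, then Gronwall combined with (G2)---is exactly the route the paper takes. However, your choice of auxiliary functional $\Psi(t)=\rho(t)(u_t,u)_\Om$ is incomplete for this coupled problem, and the phrase ``fluid coupling terms'' hides the genuine difficulty.

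The issue is that in the weak formulation \eqref{sol_def} the pressure has been eliminated: admissible test functions are pairs $(\phi,b)$ with $\phi$ solenoidal and $\phi|_\Om=(0,0,b)$. Hence you cannot test the plate equation with $b=u$ while taking $\phi=0$; the compatibility forces $\phi|_\Om=(0,0,u)\neq 0$. If instead you try to work with the strong form of the plate equation alone, the ``fluid coupling term'' you will encounter is the pressure trace $(p|_\Om,u)_\Om$, and there is no a~priori control on $p$ for weak solutions. The resolution, which the paper uses, is to take $\phi=Nu$ (the Stokes lift of $u$ defined in \eqref{fl.n0}) and correspondingly enlarge the multiplier to
\[
\widetilde\Psi(t)=\mu(t)(v,Nu)_{\EuScript O}+\rho(t)(u_t,u)_\Om,
\]
so that the Lyapunov functional is $L(t)=\sE(t)+\delta\widetilde\Psi(t)$. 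Differentiating $\widetilde\Psi$ via the weak formulation now makes sense and produces, beyond the terms you listed, the fluid contributions $\mu(t)(v,Nu_t)_{\EuScript O}$, $\mu'(t)(v,Nu)_{\EuScript O}$, $(f,Nu)_{\EuScript O}$ and $-(\nabla v,\nabla Nu)_{\EuScript O}$; these are all controllable by $\|\nabla v\|_{\EuScript O}^2$, the energy, and (A3), using the mapping properties \eqref{n}--\eqref{n1} of $N$. Once $\widetilde\Psi$ replaces $\Psi$, the equivalence $L\sim\|\cdot\|_{H_t}^2$ still holds (the extra term is lower order) and the rest of your argument---including your careful routing of $(g,u_t)_\Om$ through the trace of $v$, and the use of (F4) and (G2)---goes through exactly as you describe and matches the paper.
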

\begin{proof}
We constuct the Lyapunov functional of the form
\begin{equation}
\label{lap}
L(t)=\EuScript E(t)+\delta\left(\mu(t)(v, Nu)_{\EuScript O}+\rho(t)(u_t, u)_\Om\right).
\end{equation}
It is easy to see from (F3) and the properties of the operator $N$ that there exist $c_i>0$, $i=\overline{1,4}$ such that
\begin{equation}
\label{est}
-c_1+c_2E(t)\le L(t)\le c_3 \EuScript E(t)+c_4, 
\end{equation}
where $E(t)$ is defined by \eqref{en-def1}.  
All the calculations below can be performed on Galerkin approximations. It follows from the energy inequality \eqref{energy} and (F4)
that
\begin{multline*}\frac{d}{dt}L(t)=-\|\g v\|_{\EuScript O}^2 +\rho'(t)\|u_t\|_\Om^2+\mu'(t)\|v\|_{\EuScript O}^2+(f, v)_{\EuScript O}+(g, u_t)_{\Om}+\delta\mu(t)(v, Nu_t)_{\EuScript O}\\
-\delta\|\Delta u\|_\Om^2+\delta \rho(t) \|u_t\|_\Om^2 +\delta \rho'(t) (u_t, u)_\Om+\delta \mu'(t) (v, Nu)_{\EuScript O}\\-\delta (F(u), u)_\Om+\delta(f, Nu)_{\EuScript O}+\delta (g, u)_\Om-\delta (\g v, \g Nu)_{\EuScript O}\le -\omega L(t)+C(\|f\|_{\EuScript O}^2+\|g\|_{-1/2,  \Om}^2)
\end{multline*}
for some $\omega, C>0$. Consequently,
\[\frac{d}{dt}L(t)+\omega L(t)\le C(\|f\|_{\EuScript O}^2+\|g\|_{-1/2,  \Om}^2)\]
and using the Gronwall lemma, (G2), (F2), and \eqref{est} we come to \eqref{dis}.
\end{proof}
Lemma \ref{lem} yields the existence of a time-dependent absorbing family with the entering time $\Theta=\max\{0, \frac 1\omega log\frac{Q(R)}{1+K}\}$. 
\section{Pullback attractor.}
In order to establish the existence of a pullback attractor to the system considered, our remaining task  is to show estimate \eqref{ac}.
\begin{lemma}
Let $W^i(t)=(v^i(t), u^i(t), u_{t}^i(t))$, $i=\overline\{1,2\}$ be two weak solutions to problem \eqref{fl.1}--\eqref{IC} with initial conditions $W_\tau^i\in H_{t-T_0}$,  $\|W^i\|_{H_{t-T_0}}\le R$.   Then, for any $\varepsilon>0$ there exists $T_0>0$ and a positive constant $C(t, T_0)$ such that 
\begin{equation}
\label{est1}
\|W^1(t)-W^2(t)\|_{H_{t}}\le \varepsilon +C(T_0, R)\max\limits_{[t-T_0, t]}(\|u^1(s)-u^2(s)\|_{2-\epsilon, \Om}^2), 
\end{equation}
for any $\epsilon>0$. 
\end{lemma}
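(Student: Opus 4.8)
The plan is to adapt the Lyapunov-functional argument of Lemma~\ref{lem} to the difference $W:=W^1-W^2=(v,u,u_s)$, treating the elastic nonlinearity as a lower-order perturbation which produces the compact term on the \rhs of \eqref{est1}. First I fix the window $[t-T_0,t]$ and use the dissipativity estimate \eqref{dis}: since $\|W^i(t-T_0)\|_{H_{t-T_0}}\le R$, one has $\|W^i(s)\|_{H_s}\le R_1:=Q(R)+K$, hence $\|u^i(s)\|_{2,\Om}\le R_1$, for all $s\in[t-T_0,t]$ and $i=1,2$. Thus (F1) applies on the whole window with a single constant $C_{R_1}$, so $\|\cF(u^1(s))-\cF(u^2(s))\|_{-1/2,\Om}\le C_{R_1}\|u^1(s)-u^2(s)\|_{2-\epsilon,\Om}$. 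The pair $(v,u)$ solves the linear coupled fluid--plate system with zero body forces and the additional term $-(\cF(u^1)-\cF(u^2))$ in the plate equation, still subject to $\divergence v=0$, $v|_S=0$ and $v|_\Om=(0,0,u_s)$.

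Next I introduce, as in \eqref{lap}, the perturbed energy
\[
V(s)=E(v(s),u(s),u_s(s))+\delta\big(\mu(s)(v,Nu)_{\EuScript O}+\rho(s)(u_s,u)_\Om\big),
\]
with $E$ as in \eqref{en-def1} and $\delta>0$ small; as for \eqref{est}, the mapping properties \eqref{n}--\eqref{n1} of $N$ together with (A3) give $\tfrac12E(\cdot)\le V(s)\le 2E(\cdot)$, so that $V$ is equivalent to $\|W(s)\|_{H_s}^2$. I then differentiate $V$ along Galerkin approximations of the difference solution — the fluid part exactly as in the derivation of \eqref{energy} and Lemma~\ref{lem}, the plate part being the $(Nu,u)$-multiplier computation. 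The pressure is handled through $N$: testing the fluid equation with $Nu$ produces the boundary term $(p|_\Om,u)_\Om$ (the contribution on $S$, and the normal-derivative trace on $\Om$, vanishing thanks to $v|_S=0$, $v^1=v^2=0$ on $\Om$ and $\divergence v=0$), and this term cancels against $\rho(\partial_s^2u,u)_\Om$ arising from the plate equation. After discarding the sign-favourable terms $\tfrac12\mu'(s)\|v\|_{\EuScript O}^2$ and $\tfrac12\rho'(s)\|u_s\|_\Om^2$, the only dangerous contributions left are $\delta\rho\|u_s\|_\Om^2$, the mixed term $\delta(\g v,\g Nu)_{\EuScript O}$, the lower-order terms $\delta|\mu'|\,|(v,Nu)_{\EuScript O}|$, $\delta|\rho'|\,|(u_s,u)_\Om|$, $\delta|(v,N\partial_su)_{\EuScript O}|$, and the two terms containing $\cF(u^1)-\cF(u^2)$.

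To bound these I use the coupling $v|_\Om=u_s$ and the trace theorem, $\|u_s\|_{L_2(\Om)}\le\|u_s\|_{1/2,\Om}\le C\|\g v\|_{\EuScript O}$: this controls $\delta\rho\|u_s\|_\Om^2$ and, with \eqref{n1} for $s=0$, also $\delta|(v,N\partial_su)_{\EuScript O}|$; combined with (F1) it bounds $(\cF(u^1)-\cF(u^2),u_s)_\Om$ by $\tfrac14\|\g v\|_{\EuScript O}^2+C_{R_1}\|u^1-u^2\|_{2-\epsilon,\Om}^2$, while $\delta(\cF(u^1)-\cF(u^2),u)_\Om$ is bounded via (F1) and $\|u\|_{1/2,\Om}\le\|u^1-u^2\|_{2-\epsilon,\Om}$. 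The lower-order terms with $\mu',\rho'$ are bounded, using \eqref{n1} with $s=0$, the trace theorem and (A3), by $C\delta\|\g v\|_{\EuScript O}^2+C\|u\|_{L_2(\Om)}^2$ with $\|u\|_{L_2(\Om)}\le\|u^1-u^2\|_{2-\epsilon,\Om}$. For the mixed term I apply Young's inequality with a \emph{large} weight $\beta$ together with $\|\g Nu\|_{\EuScript O}\le C\|u\|^*_{1/2,\Om}\le C''\|\De u\|_\Om$ (from \eqref{n1} and elliptic regularity for the clamped plate): having fixed $\beta$ large enough, depending only on $C''$, its $\|\De u\|_\Om^2$-contribution is at most $\tfrac12\delta\|\De u\|_\Om^2$. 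Taking $\delta$ small enough to absorb all remaining $\|\g v\|_{\EuScript O}^2$-contributions I arrive at
\[
\frac{d}{ds}V(s)\le-\tfrac14\|\g v\|_{\EuScript O}^2-\tfrac{\delta}{2}\|\De u\|_\Om^2+C_{R_1}\|u^1(s)-u^2(s)\|_{2-\epsilon,\Om}^2 .
\]
Since $v$ vanishes on $S$, Poincaré's inequality gives $\mu\|v\|_{\EuScript O}^2+\rho\|u_s\|_\Om^2\le C\|\g v\|_{\EuScript O}^2$, so $\tfrac14\|\g v\|_{\EuScript O}^2+\tfrac{\delta}{2}\|\De u\|_\Om^2\ge\omega V(s)$ for some $\omega>0$, and Gronwall's lemma on $[t-T_0,t]$ yields $V(t)\le V(t-T_0)e^{-\omega T_0}+\tfrac{C_{R_1}}{\omega}\max_{[t-T_0,t]}\|u^1-u^2\|_{2-\epsilon,\Om}^2$. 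Bounding $V(t-T_0)\le c_0R^2$ and using $\|W^1(t)-W^2(t)\|_{H_t}^2\le 4V(t)$, and finally choosing $T_0=T_0(\varepsilon,R)$ so that $4c_0R^2e^{-\omega T_0}\le\varepsilon$, I obtain \eqref{est1} with $C(T_0,R)=\tfrac{4C_{R_1}}{\omega}$; this is precisely the inequality \eqref{ac} required in Theorem~\ref{pr:com}, the property \eqref{p} then following — as for \eqref{u-strong} — from \eqref{dis} and the Aubin--Lions theorem.

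\textbf{Main obstacle.} The essential difficulty is the complete absence of mechanical damping in the plate component: the sole source of dissipation is the viscous term $\|\g v\|_{\EuScript O}^2$, and the multiplier $(Nu,u)$ is exactly what channels it into the plate. Making this quantitative requires careful bookkeeping of (i) the pressure boundary term $(p|_\Om,u)_\Om$ and its cancellation against $\rho(\partial_s^2u,u)_\Om$, and (ii) the order of the constant choices — the Young weight $\beta$ in the estimate of $(\g v,\g Nu)_{\EuScript O}$ must be fixed first, and large, and only afterwards may $\delta$ be taken small — so that the $\|\De u\|_\Om^2$ generated by the mixed term does not swamp the $-\delta\|\De u\|_\Om^2$ delivered by the multiplier. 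The time-dependence of $\mu$ and $\rho$ is harmless here: it enters only through the discarded terms $\tfrac12\mu'\|v\|^2$, $\tfrac12\rho'\|u_s\|^2$ and the lower-order $|\mu'|\,|(v,Nu)|$, $|\rho'|\,|(u_s,u)|$, all controlled by (A3).
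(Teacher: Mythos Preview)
Your argument is correct and reaches \eqref{est1}, but it is organized differently from the paper's proof. The paper does \emph{not} build a Lyapunov functional for the difference; instead it works directly with the quadratic energy $E$ of $W=W^1-W^2$, integrates the energy inequality twice (first over $[s,t]$, then over $s\in[t-T_0,t]$) to obtain $T_0E(t)\le\int_{t-T_0}^t E(s)\,ds+\ldots$, and then bounds $\int_{t-T_0}^t E(s)\,ds$ by $C\int_{t-T_0}^t\|\nabla v\|_{\EuScript O}^2\,ds+\int_{t-T_0}^t\|\Delta u\|_\Om^2\,ds$ via the trace theorem. The crucial $\int\|\Delta u\|_\Om^2$ is handled by inserting the multiplier pair $(\phi,b)=(Nu,u)$ into the weak formulation \eqref{sol_def} on $[t-T_0,t]$ --- the same $(Nu,u)$ device you use, but employed in integrated form rather than as a perturbation of a differential inequality. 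The outcome is $E(t)\le C(R)/T_0+C(T_0,R)\max\|u\|_{2-\epsilon,\Om}^2$, so the small term is algebraic in $T_0$ rather than exponential. Your route is tighter (it gives an explicit exponential rate and recycles verbatim the structure of Lemma~\ref{lem}), while the paper's double-integration avoids having to check the uniform equivalence $V\sim E$ and the somewhat delicate order-of-constants argument you flag under ``Main obstacle''. Both rely on exactly the same three ingredients: the $(Nu,u)$ multiplier to extract $\|\Delta u\|_\Om^2$, the trace estimate $\|u_s\|_{1/2,\Om}\le C\|\nabla v\|_{\EuScript O}$ to recover plate dissipation from the fluid, and (F1) to push the nonlinearity into the compact remainder.
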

\begin{proof}
We use the notations $W(t)=(v(t), u(t), u_t(t))=W^1(t)-W^2(t)$.
It follows from the energy inequality that 
\begin{equation}
\label{91}
\frac{d}{d\xi} E(\xi)\le -\|\g v\|_{\EuScript O}^2+\mu'(\xi)\|v\|_{\EuScript O}^2+\rho'(\xi)\|u_\xi\|_\Om^2+(F(u^1)-F(u^1), u_\xi)_\Om.
\end{equation}
Integrating \eqref{91} over the interval $[s, t]$ and then $[t-T_0, t]$ we come to
\begin{multline}
\label{92}
T_0 E(t)\le \int\limits_{t-T_0}^tE(s)ds-\int\limits_{t-T_0}^t \int\limits_{s}^t\|\g v\|_{\EuScript O}^2d\xi ds+\int\limits_{t-T_0}^t \int\limits_{s}^t\mu'(\xi)\|v\|_{\EuScript O}^2d\xi ds\\+\int\limits_{t-T_0}^t \int\limits_{s}^t\rho'(\xi)\|u_\xi\|_\Om^2d\xi ds+\int\limits_{t-T_0}^t \int\limits_{s}^t(F(u^1)-F(u^1), u_\xi)_\Om d\xi ds.
\end{multline}
It follows from the trace theorem and assumption (F1) that for any $\sigma>0$
\begin{multline}
\label{93}
\left|\;\int\limits_{t-T_0}^t \int\limits_{s}^t(F(u^1)-F(u^1), u_\xi)_\Om d\xi ds\right|\le \sigma \int\limits_{t-T_0}^t \int\limits_{s}^t\|\g v\|_{\EuScript O}^2d\xi ds\\+C(T_0, R, \sigma)\max\limits_{[t-T_0, t]}\|u\|_{2-\epsilon, \Om}^2.
\end{multline}
Integrating \eqref{91} over the interval $[t-T_0, t]$ and taking into consideration (A2) and (F1) we obtain 
\begin{multline}
\label{94}
E(t)+\int\limits_{t-T_0}^t\|\g v\|_{\EuScript O}^2ds\le C(R)\int\limits_{t-T_0}^t\|u\|_{2-\epsilon,\Om}^2ds+E(t-T_0)\le C(T_0, R)\max\limits_{[t-T_0, t]}\|u\|_{2-\epsilon, \Om}^2+C(R).
\end{multline}
It is a straightforward consequence of the trace theorem that 
\begin{equation}
\label{95}
\int\limits_{t-T_0}^t E(s)ds\le  C\int\limits_{t-T_0}^t\|\g v\|_{\EuScript O}^2ds+\int\limits_{t-T_0}^t\|u\|_{2, \Om}^2ds.
\end{equation}
Now we estimate the last term in \eqref{95}.  Substituting into \eqref{sol_def} $b=u$ and $\phi=Nu$ and choosing $\tau=t-T_0$ and $T=t$ we arrive at 
\begin{multline*}
\int\limits_{t-T_0}^t\|u\|_{2, \Om}^2ds\le \int\limits_{t-T_0}^t\rho(s)\|u_s\|_{2, \Om}^2ds-\rho(t)(u_t(t), u(t))_\Om+ \rho(t-T_0)(u_t(t-T_0), u(t-T_0))_\Om\\+\int\limits_{t-T_0}^t(F(u^1)-F(u^2), u_s)_\Om ds+\int\limits_{t-T_0}^t\mu(s)(v, Nu_s)_{{\EuScript O}} ds -\int\limits_{t-T_0}^t(\g v, \g Nu)_{{\EuScript O}} ds\\- \mu(t)(v(t), Nu(t))_{{\EuScript O}}+ \mu(t-T_0)(v(t-T_0), Nu(t-T_0))_{{\EuScript O}}+\int\limits_{t-T_0}^t\mu'(s)(v, Nu)_{{\EuScript O}} ds.
\end{multline*}
Relying on the properties of the operator $N$, the trace theorem, and  (A3) we have the estimate
\begin{equation}
\label{96}
\int\limits_{t-T_0}^t\|u\|_{2, \Om}^2ds\le
C(T_0, R)\max\limits_{[t-T_0, t]}\|u\|_{2-\epsilon, \Om}^2+С\int\limits_{t-T_0}^t\|\g v\|_{\EuScript O}^2ds+C(R).
\end{equation}
Combining \eqref{92}--\eqref{96} and choosing $\sigma$ in \eqref{93} we obtain
\[E(t)\le \frac{C(R)}{T_0}+C(T_0, R)\max\limits_{[t-T_0, t]}\|u\|_{2-\epsilon, \Om}^2,\]
which leads immediately to the assertion of the lemma.\\
Now we formulate our main result.
\end{proof}
\begin{theorem}
The process $U(t, \tau)$ generated by problem \eqref{fl.1}--\eqref{IC} posessess a pullback attractor. 
\end{theorem}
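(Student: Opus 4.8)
The plan is to obtain the pullback attractor from the abstract asymptotic-compactness criterion of Theorem~\ref{pr:com}, supplying as its two ingredients the dissipativity of Lemma~\ref{lem} and the stabilizability estimate \eqref{est1}. First I would fix the pullback absorbing family: by Lemma~\ref{lem} and the remark following its proof, the time-dependent balls $D_t=\mathbb B_t(R_0)$ with a suitable radius $R_0>0$ form a uniformly bounded time-dependent absorbing family $\EuScript D=\{D_t\}_{t\in\mathbb R}$ for the process $U(t,\tau):H_\tau\to H_t$ of Theorem~\ref{WP}; exactly as at the start of the proof of Theorem~\ref{pr:com}, I may assume $\EuScript D$ positively invariant, replacing $D_t$ by $\bigcup_{\tau\le t-\Theta}U(t,\tau)D_\tau$ otherwise.

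Next I would fix $t\in\mathbb R$ and $\varepsilon>0$ and apply the preceding Lemma with $R=R_0$: it yields $T_0=T_0(\varepsilon)>0$ and a constant $C(T_0,R_0)>0$ such that, writing $(v^i,u^i,u^i_s)(\cdot)=U(\cdot,t-T_0)y_i$ for $y_i\in D_{t-T_0}$, inequality \eqref{ac} holds with
\[
\Phi_{T_0,t}(y_1,y_2):=C(T_0,R_0)\,\max_{s\in[t-T_0,t]}\|u^1(s)-u^2(s)\|_{2-\epsilon,\Om}^2 .
\]
It then remains to verify the compensated-compactness property \eqref{p} for this choice of $\Phi$.

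This verification is the heart of the argument. For an arbitrary sequence $\{y_n\}\subset D_{t-T_0}$ let $(v^n,u^n,u^n_s)(\cdot)=U(\cdot,t-T_0)y_n$ be the associated weak solutions on $[t-T_0,t]$. Since $\|y_n\|_{H_{t-T_0}}\le R_0$ uniformly, the energy equality \eqref{energy}, together with (F2)--(F3), the trace theorem and (A1)--(A3), gives a constant independent of $n$ bounding $\{u^n\}$ in $C([t-T_0,t];\widehat H^2_0(\Om))$, $\{u^n_s\}$ in $L_2(t-T_0,t;[H^{1/2}_*(\Om)]^2)$ and $\{v^n\}$ in $L_2(t-T_0,t;Y)$, just as in \eqref{a-pri0}. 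By the Aubin--Dubinsky theorem (as in the derivation of \eqref{u-strong}) a subsequence $u^{n_k}$ converges strongly in $C([t-T_0,t];\widehat H^{2-\epsilon}_0(\Om))$ to some $u^\ast$; hence $\Phi_{T_0,t}(y_{n_k},y_{n_l})\to C(T_0,R_0)\max_{[t-T_0,t]}\|u^{n_k}-u^\ast\|_{2-\epsilon,\Om}^2$ as $l\to\infty$, and letting $k\to\infty$ shows $\liminf_{n}\liminf_{m}\Phi_{T_0,t}(y_n,y_m)=0$, which is \eqref{p}.

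With \eqref{ac} and \eqref{p} established, Theorem~\ref{pr:com} applies and produces the compact pullback attracting family $\{\omega_t(\EuScript D)\}_{t\in\mathbb R}$; thus $U(t,\tau)$ is asymptotically compact in the sense of Definition~\ref{d6}. To conclude that the pullback attractor in the sense of Definition~\ref{d7} exists, I would invoke the strong continuity of $U(t,\tau)$ from Theorem~\ref{WP} to check, in the standard way, that $\{\omega_t(\EuScript D)\}$ is invariant and is contained in every closed pullback attracting family, hence is the smallest one. I expect the main obstacle to be the step verifying \eqref{p}: one must make sure the a priori bounds of \eqref{a-pri0} really transfer, uniformly in $n$, from Galerkin approximations to genuine weak solutions emanating from a bounded family of data on the fixed finite window $[t-T_0,t]$, and that the Aubin--Dubinsky hypotheses are met in the precise scale $\widehat H^2_0(\Om)\hookrightarrow\hookrightarrow\widehat H^{2-\epsilon}_0(\Om)$ --- here the coupling $v^n|_\Om=u^n_s$ together with the $L_2(\cdot;Y)$-bound on $v^n$ is exactly what secures the needed bound on $u^n_s$.
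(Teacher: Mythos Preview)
Your proposal is correct and follows essentially the same route as the paper: use Lemma~\ref{lem} for the absorbing family, the stabilizability estimate \eqref{est1} to produce $\Phi_{T_0,t}$, and Aubin--Dubinsky compactness on the plate component to verify \eqref{p}, then conclude via Theorem~\ref{pr:com}. Your write-up is in fact slightly more careful than the paper's (you spell out the subsequence argument for \eqref{p} and note the residual step of passing from asymptotic compactness to the minimal attracting family), but the underlying strategy is identical.
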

\begin{proof}
There exists a time-dependent absorbing family and we have in hand Lemma 2. Therefore, to use Theorem 1 it remains to show that \eqref{p} holds true for $\Phi_{T_0,t}(W^1,W^2)=C(T_0, R)\max\limits_{[t-T_0, t]}\|u^1(s)-u^2(s)\|_{2-\epsilon, \Om}^2$. Let $W^n(t)=(v^n(t), u^n(t), u_t^n(t))$ be a sequence of solutions to problem 
\eqref{fl.1}--\eqref{IC} corresponding to initial data $W^n$,  from $D_{t-T_0}$, i.e. $\|W^n\|_{H_{t-T_0}}\le R$  .Then, it follows from Lemma 1 that up to a subsequence
$$
\begin{array}{l}
u^n(s)-u^m(s) \to  0,\;\; \text{weak-* in}\;\;  L_\infty(t-T_0, t; \widehat H_0^2(\Om)),\\
u_s^n(s)-u_s^m(s) \to  0 ,\;\;\text{weakly in} \;\; L_2(t-T_0, t; H_*^{1/2}(\Om)).
\end{array}
$$
By the Aubin’s compactness lemma \cite{Simon}, we have \eqref{p}. This together with Theorem 1 completes the proof.
\end{proof}

\textbf{Acknowledgements.}\par
The author is grateful to Irina Kmit, Humboldt University of Berlin, for fruitful discussion and valuable comments.\par
The author was partially supported by the Volkswagen Foundation grant within frameworks of the international
project ”Modeling, Analysis, and Approximation Theory toward Applications in Tomography and
Inverse Problems.”

\medskip

Received xxxx 20xx; revised xxxx 20xx.
\medskip

\begin{thebibliography}{99}

 \bibitem{Carab}
    T.~Caraballo, A. N. Carvalho, J.A. Langa, and F. Rivera
    \newblock Existence of pullback attractors for asymptotically compact processes.
     {\em Cadernos De Matematica}, 10: 179--100,
    2006.

 \bibitem{Conti}
    M.~Conti,   V.~Pata, and   R.~Temam.
    \newblock Attractors for processes on time-dependent spaces. Applications to wave equations.
 {\em Journal of Differential Equations}, 255:1254--1277,
    2012.


\bibitem{Chu_2010} I. Chueshov, A global attractor for a fluid-plate interaction
model accounting only for longitudinal
deformations of the plate, {\it Math. Methods
 Appl. Sci.} {\bf 34}, 1801--1812. 

\bibitem{CF}
\newblock I. Chueshov and T. Fastovska,
\newblock On interaction of circular cylindrical shells with a Poiseuille type flow,
\newblock \emph{Evolution Equations and Control Theory},  \textbf{5} (2016), 605--629.

\bibitem{cl-jdde}
I. Chueshov and I. Lasiecka, Attractors for second order evolution equations,
{\it J. Dynam.  Diff. Eqs.},  16 (2004), 469--512.


\bibitem{cl-mem}
I. Chueshov and I. Lasiecka,
\textit{Long-Time Behavior of Second Order Evolution  Equations
with  Nonlinear Damping},
Memoirs of AMS, vol.195, no. 912, AMS, Providence, RI,
2008.

\bibitem{cl-book}
I. Chueshov and I. Lasiecka,{\it Von Karman  Evolution Equations},
Sprin\-ger, New York, 2010.

\bibitem{ChuRyz2011}
\newblock I. Chueshov and I. Ryzhkova,
\newblock A global attractor for a
fluid-plate interaction model,
\newblock \emph{Comm. Pure Appl. Anal.},  \textbf{12}(2013), 1635--1656.


\bibitem{Duane}
F. Di Plinio, G.S. Duane, R. Temam
    \newblock  Time dependent attractor for the oscillon equation. {\em Cont. Dyn. Sys.}, 29:141–167,
    2011.


\bibitem{F}
\newblock T. Fastovska,
\newblock  Long-time behaviour of a radially symmetric fluid-shell interaction system,
\newblock \emph{Discrete and Continuous Dynamical Systems-A},  \textbf{38}, no.3 (2018), 1315--1348.



\bibitem{KochLa_2002}
\newblock H. Koch and I. Lasiecka,
 \newblock Hadamard well-posedness of weak solutions in nonlinear dynamic elasticity-full von Karman systems,
    \newblock\emph{Prog. Nonlinear Differ. Equ. Appl},  \textbf{50} (2002), 197--216.

\bibitem{lagnese}
 J. Lagnese, {\it Boundary Stabilization of Thin Plates},
SIAM, Philadelphia, 1989.

\bibitem{LiMa_1968} J.-L. Lions, E. Magenes,
{\it Probl\'emes aux limites non homog\'enes et applications,} Vol. 1,
Dunod, Paris, 1968.



\bibitem{Simon}
\newblock J. Simon,
\newblock  \emph{Compact sets in the space $L^p(0,T;B)$},
\newblock    Ann. Mat. Pura Appl., Ser.4 \textbf{148} (1987), 65--96.


\bibitem{Ma}
 T.F. Ma, R.N. Monteiro, and A.C. Pereira,
  \emph{ Pullback Dynamics of Non-autonomous Timoshenko Systems
}
   Appl. Math. Optim.,   (2017), https://doi.org/10.1007/s00245-017-9469-2.

\bibitem{temam-NS}
R. Temam, {\it Navier-Stokes Equations: Theory and Numerical Analysis}, Reprint of the 1984
edition, AMS Chelsea Publishing, Providence, RI, 2001.

\bibitem{Triebel78}
H. Triebel, {\it Interpolation Theory, Functional Spaces and
Differential Operators}, North Holland, Amsterdam, 1978.


\end{thebibliography}
\end{document}